\newcommand{\Sym}{\mathsf{Sym}}
\newcommand{\inj}{\ar@{^{(}->}}
\newcommand{\surj}{\ar@{->>}}
\newcommand{\la}{\lambda}
\newcommand{\IC}{\ensuremath{\mathbb{C}}}
\newcommand{\IN}{\ensuremath{\mathbb{N}}}
\newcommand{\aS}{\ensuremath{\mathfrak{S}}}
\newcommand{\aSS}{\ensuremath{\mathsf{S}}}
\DeclareMathOperator{\diag}{diag}
\newcommand{\Weyl}[1]{\{{#1}\}}
\newcommand{\Ch}{\textsf{Ch}}
\newcommand{\Specht}[1]{[{#1}]}
\newcommand{\tensor}{\smash{\textstyle\bigotimes}}
\newcommand{\GL}{\mathsf{GL}}
\numberwithin{equation}{section}
\newtheorem{theorem}[equation]{Theorem}
\newtheorem{corollary}[equation]{Corollary}
\newtheorem{conjecture}[equation]{Conjecture}
\newtheorem{claim}[equation]{Claim}
\theoremstyle{definition}
\newtheorem{remark}[equation]{Remark}
\title{On McKay's propagation theorem for the Foulkes conjecture}
\author{Christian Ikenmeyer${}^1$}
\date{\today}
\begin{document}
\sloppy
\maketitle
\begin{abstract}
We translate the main theorem in Tom McKay's paper ``On plethysm conjectures of Stanley and Foulkes'' (J.\ Alg.\ 319, 2008, pp.\ 2050--2071)
to the language of weight spaces and projections onto invariant spaces of tensors,
which makes its proof short and elegant.
\end{abstract}

{\mbox{~}}

{\noindent\footnotesize Keywords: Representation theory of the symmetric group; Plethysm; Foulkes Conjecture; Foulkes-Howe Conjecture

}

{\mbox{~}}

\noindent{\footnotesize MSC2010: 20C30

}

\footnotetext[1]{Texas A\&M University, ciken$@$math.tamu.edu}

\section{Introduction}
The Foulkes conjecture~\cite{Fou:50} states an inequality of certain representation theoretic multiplicities.
It comes in several different equivalent formulations, the most straightforward one being the following.
\begin{conjecture}[Foulkes conjecture]\label{conj:foulkes}
Let $a, b \in \IN_{>0}$ with $a \leq b$.
Let $U$ be a finite dimensional vector space of dimension at least $b$.
Then for every partition $\lambda$
the multiplicity of the irreducible $\GL(U)$ representation $\{\lambda\}$ in the plethysm
$\Sym^a(\Sym^b U)$ is at most as large as 
the multiplicity of $\{\lambda\}$ in $\Sym^b(\Sym^a U)$.
\end{conjecture}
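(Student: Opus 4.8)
Conjecture~\ref{conj:foulkes} is at present \emph{open}: this paper proves only the propagation step --- McKay's theorem, recast via weight spaces and projections onto spaces of invariants --- so what follows is the larger strategy that step is meant to serve, and the precise point at which it stalls. The first move is to leave $\GL(U)$-representation theory: for $\dim U$ at least the number of parts of $\lambda$, the multiplicity of $\{\lambda\}$ in $\Sym^a(\Sym^b U)$ is the plethysm coefficient $\langle h_a[h_b], s_\lambda\rangle$; since $h_a[h_b]$ is the Frobenius characteristic of the $\mathfrak{S}_{ab}$-permutation module on set partitions of $\{1,\dots,ab\}$ into $a$ blocks of size $b$, that coefficient equals $\dim\Specht{\lambda}^{\mathfrak{S}_b\wr\mathfrak{S}_a}$, the dimension of the $\mathfrak{S}_b\wr\mathfrak{S}_a$-invariants in the Specht module $\Specht{\lambda}$ of $\mathfrak{S}_{ab}$. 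Thus the conjecture asserts exactly that, for $a\le b$ and every $\lambda\vdash ab$,
\[
\dim\Specht{\lambda}^{\mathfrak{S}_b\wr\mathfrak{S}_a}\ \le\ \dim\Specht{\lambda}^{\mathfrak{S}_a\wr\mathfrak{S}_b},
\]
i.e.\ that the projector onto the first invariant subspace has rank at most that of the projector onto the second --- exactly the picture set up in this paper.

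With this reformulation the plan is base cases plus induction in $a$. For the base cases one proves the inequality for small $a$ directly in the model above: $a\le 2$ is classical, $a=3$ is the theorem of Dent and Siemons, and $a=4$ and $a=5$ have been settled by explicit (and ultimately finite) constructions --- the $a=5$ case, due to Cheung, Ikenmeyer and Mkrtchyan, being the hardest such verification --- so that the conjecture holds for all $b$ whenever $a\le 5$. For the inductive step, fix $a$ and feed the diagonal instance $(a,a)$ into the propagation theorem of this paper: validity for $(a,b)$ yields validity for $(a,b+1)$, and iteration gives all $b\ge a$. Consequently the only content not reached this way is the diagonal case $(a,a)$ for unbounded $a$, which --- precisely because propagation is available --- is logically equivalent to the full conjecture.

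\textbf{The main obstacle} is therefore the diagonal case for large $a$. No propagation mechanism is known that \emph{lowers} $a$, and Brion's asymptotic injectivity of the Foulkes--Howe map $\Sym^a(\Sym^b U)\to\Sym^b(\Sym^a U)$ controls only the regime where $b$ is large relative to $a$, not $b$ close to $a$. Finishing would require either such a descending propagation, or a uniform --- $a$-independent in its construction --- $\mathfrak{S}_{ab}$-equivariant injection $\Sym^a(\Sym^b U)\hookrightarrow\Sym^b(\Sym^a U)$ whose kernel is controlled on every isotypic component; since the Foulkes--Howe map is itself not injective in general, constructing such a map explicitly is exactly the difficulty that has so far resisted every approach.
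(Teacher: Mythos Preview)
You correctly recognize that Conjecture~\ref{conj:foulkes} is open and that the paper only supplies the propagation step, and your symmetric-group reformulation via wreath-product invariants is a valid (if different) route to the same multiplicities the paper reaches through weight spaces. However, your account of how propagation feeds into a strategy for the full conjecture contains a genuine confusion. McKay's theorem (Theorem~\ref{thm:main}) does \emph{not} say that validity of the Foulkes inequality at $(a,b)$ implies validity at $(a,b+1)$; it says that \emph{injectivity of the specific Foulkes--Howe map} $\Psi_{a\times(b-1)}$ implies injectivity of $\Psi_{a\times c}$ for all $c\ge b$. Injectivity of $\Psi_{a\times b}$ is strictly stronger than the multiplicity inequality at $(a,b)$, and no propagation result for the bare inequality is known.

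This distinction makes your proposed reduction to the ``diagonal case $(a,a)$'' collapse from both sides. On the conjecture side, the diagonal case is trivial: when $a=b$ one has $\Sym^a(\Sym^b U)=\Sym^b(\Sym^a U)$ literally, so there is nothing to prove there and it is certainly not ``logically equivalent to the full conjecture.'' On the injectivity side, one cannot seed McKay's theorem at the diagonal either, since $\Psi_{5\times 5}$ and $\Psi_{6\times 6}$ are known \emph{not} to be injective (as the paper recalls); this is precisely why the $a=5$ case required the separate verification that $\Psi_{5\times 6}$ is injective. The actual obstruction is therefore not the diagonal at all: for each $a\ge 6$ one would need to exhibit \emph{some} $b_0$ with $\Psi_{a\times b_0}$ injective, after which Theorem~\ref{thm:main} handles all $b\ge b_0$ while the finitely many $a\le b<b_0$ still require a different argument. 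No mechanism---and in particular no induction in $a$---is known for producing such a $b_0$.
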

The inequality $a \leq b$ is important: We know that $\Sym^a(\Sym^b U)$
contains irreducible $\GL(U)$ representations with up to $a$ parts,
but $\Sym^b(\Sym^a U)$ contains irreducible $\GL(U)$ representations with up to $b$ parts.

Using Schur-Weyl duality \cite{Gay:76} (see also \cite{Ike:12b}) one can interpret Conjecture~\ref{conj:foulkes} in terms of
representations of the symmetric group and we will use that interpretation later in this paper.

Conjecture~\ref{conj:foulkes} is true for $a \leq 5$:
for $a \leq 2$ see the explicit formulas in \cite{Thr:42},
for the case $a \leq 3$ see \cite{DS:00}, and see Corollary~\ref{cor:computation} for $a \leq 4$ and Corollary~\ref{cor:bettercomputation} for $a \leq 5$.
Brion \cite{Bri:93} showed that Conjecture~\ref{conj:foulkes} is true in the cases where $b$ is large enough with respect to $a$.
Conjecture~\ref{conj:foulkes} is true if we only consider partitions $\la$ with at most 2 rows~\cite{Her:1854}, a phenomenon called Hermite reciprocity.
Manivel showed that Conjecture~\ref{conj:foulkes} is also true for all partitions with very long first rows \cite[Thm.~4.3.1]{mani:98}.
In this case we do not only have an inequality of the multiplicities, but equality.

Conjecture~\ref{conj:foulkes} is equivalent to saying that there exists a $\GL(U)$ equivariant inclusion map
\begin{equation}\label{eq:foulkesconj}
\Sym^a(\Sym^b U) \hookrightarrow \Sym^b(\Sym^a U).
\end{equation}
A natural candidate for the map in \eqref{eq:foulkesconj} was the following map $\Psi_{a \times b}$:
\[
\xymatrix{
\Sym^a(\Sym^b U) \inj[d]^\iota \ar[r]^{\Psi_{a \times b}} & \Sym^b (\Sym^a U) \\
\tensor^a(\tensor^b U) \ar[r]^{r} & \tensor^b (\tensor^a U) \surj[u]_{\varrho}
}
\]
where $\iota$ denotes the canonical embedding of symmetric tensors in the space of all tensors,
$\varrho$ is the canonical projection,
and $r$ is the canonical isomorphism given by reordering tensor factors.
This map has a natural analog $\psi_{a\times b}$ in the symmetric group interpretation that we will discuss in Section~\ref{sec:prelim}.
It was combinatorially defined in \cite{BL:89}.

Hadamard conjectured \cite{Had:97} that $\Psi_{a\times b}$ is injective for all $a \leq b$.
Howe \cite[p.~93]{How:87} wrote that this ``is reasonable to expect''.
However, M\"uller and Neunh\"offer \cite{MN:05} showed that $\Psi_{5\times 5}$ has a nontrivial kernel.
In \cite{CIM:15} the kernel of $\Psi_{5\times 5}$ is determined as a $\GL(U)$ representation: It is multiplicity free and consists of the following types of irreducible representations:
(14, 7, 2, 2), (13, 7, 2, 2, 1), (12, 7, 3, 2, 1), (12, 6, 3, 2, 2),
(12, 5, 4, 3, 1), (11, 5, 4, 4, 1), (10, 8, 4, 2, 1), and (9, 7, 6, 3).
Also it is shown in \cite{CIM:15} that $\Psi_{6\times 6}$ is not injective. 

McKay \cite{McK:08} contributed the following main theorem.
\begin{theorem}[McKay's main theorem]\label{thm:main}
If $\Psi_{a \times (b-1)}$ is injective, then $\Psi_{a \times c}$ is injective for all $c \geq b$.
\end{theorem}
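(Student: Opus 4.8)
The plan is to reduce the statement to injectivity on a single $\GL(U)$-weight space, and there to exhibit $(\Psi_{a\times b})$ as a block-triangular map one of whose diagonal blocks is essentially $\Psi_{a\times(b-1)}$. By induction it suffices to treat the step $b-1\rightsquigarrow b$, i.e.\ to show that injectivity of $\Psi_{a\times(b-1)}$ implies injectivity of $\Psi_{a\times b}$; iterating then gives all $c\ge b$. (Throughout one works in the non-degenerate range $a\le b-1$, which is what makes the auxiliary statements below meaningful.)

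Concretely, fix $U$ with $\dim U=a(b-1)+1$, single out a basis vector $e_0$ with complementary hyperplane $W$, and put $\chi:=(a,1^{a(b-1)})$. Every irreducible constituent $\{\lambda\}$ of $\Sym^a(\Sym^bU)$ has $\lambda\vdash ab$ and $\ell(\lambda)\le a$, hence $\lambda_1\ge b\ge a$, and such a $\lambda$ admits a semistandard tableau of content $\chi$; so $\chi$ is a weight of every constituent of $\Sym^a(\Sym^bU)$. Since $\ker\Psi_{a\times b}$ is a $\GL(U)$-subrepresentation and taking $\chi$-weight spaces is exact, $\Psi_{a\times b}$ is injective if and only if its restriction $(\Psi_{a\times b})_\chi$ to the $\chi$-weight space is injective. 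Now use the grid description of the Foulkes map (embed $\Sym^a(\Sym^bU)$ into $\tensor^a(\tensor^bU)$, reorder, project onto $\aS_a\wr\aS_b$-invariants; equivalently, write each of the $a$ degree-$b$ factors as a product of $b$ linear forms, arrange them in an $a\times b$ grid, and multiply down the columns). The $\chi$-weight spaces of source and target split according to how the $a$ copies of $e_0$ are distributed among the $a$ outer factors, resp.\ among the $b$ outer factors, giving $\GL(W)$-stable decompositions $\bigoplus_{\alpha}(\,\cdot\,)_\alpha$ and $\bigoplus_{\beta}(\,\cdot\,)_\beta$ indexed by partitions $\alpha,\beta$ of $a$. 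The $0/1$ matrix recording the positions of the $e_0$'s in a grid has row sums $\alpha$ and column sums $\beta$, so by the theory of such matrices $\beta$ can only be dominated by $\transpose\alpha$, and for $\beta=\transpose\alpha$ the matrix is the Young diagram of $\alpha$, unique up to the evident symmetries. Hence $(\Psi_{a\times b})_\chi$ is block triangular for these two decompositions, with diagonal blocks $\Psi^{(\alpha)}\colon(\,\cdot\,)_\alpha\to(\,\cdot\,)_{\transpose\alpha}$.

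Next I would identify the principal block. For $\alpha=(1^a)$ the source block consists of the elements in which $e_0$ divides each of the $a$ outer factors exactly once; cancelling these $e_0$'s identifies $(\,\cdot\,)_{(1^a)}$ with $\Sym^a(\Sym^{b-1}W)$. In a grid contributing to the target block $\transpose{(1^a)}=(a)$ all $a$ copies of $e_0$ lie in a single column, so one outer factor of the value equals $e_0^a$; cancelling $e_0^a$ and reading off the recipe on the remaining $a\times(b-1)$ grid shows that the composite $(\,\cdot\,)_{(1^a)}\xrightarrow{\Psi_{a\times b}}(\Sym^b\Sym^aU)_\chi\to(\,\cdot\,)_{(a)}\cong\Sym^{b-1}(\Sym^aW)$ is a nonzero scalar times $\Psi_{a\times(b-1)}$. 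Moreover the target block $(a)$ receives a contribution only from the source block $(1^a)$, since a factor divisible by $e_0^2$ feeds $e_0$ into at least two columns. A triangular elimination along the dominance order then finishes the reduction: processing the $\alpha$ in increasing order, project the relation $(\Psi_{a\times b})_\chi(x)=0$ onto the target block $\transpose\alpha$; the source blocks contributing to $\transpose\alpha$ are exactly those indexed by $\alpha_0\unlhd\alpha$, and by induction $x_{\alpha_0}=0$ for $\alpha_0\lhd\alpha$, whence $\Psi^{(\alpha)}(x_\alpha)=0$. Thus $(\Psi_{a\times b})_\chi$ is injective as soon as every diagonal block $\Psi^{(\alpha)}$ is, and $\Psi^{((1^a))}$ is injective by hypothesis.

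The part I expect to be the main obstacle is proving injectivity of the non-principal diagonal blocks $\Psi^{(\alpha)}$, $\alpha\neq(1^a)$. After cancelling the (uniquely positioned) $e_0$'s, each such block is a ``skew'' Foulkes-type map: the cells carrying $e_0$ form the Young diagram of $\alpha$, and $\Psi^{(\alpha)}$ reorders and symmetrizes the $W$-entries filling the complementary skew shape. The aim is to decompose these into genuine Foulkes maps $\Psi_{a'\times b'}$ (with $a'\le a$, $b'<b$) composed with comultiplications, and then to establish their injectivity by induction — on $a$, using the auxiliary downward propagation ``$\Psi_{a\times b}$ injective $\Rightarrow$ $\Psi_{(a-1)\times b}$ injective'', which follows from the very same weight-space argument applied to the subspace in which one outer factor equals $e_0^{\,b}$. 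This is where the combinatorial bookkeeping is heaviest, and where one must keep track of the range $a\le b-1$, which is precisely the regime in which the required auxiliary injectivities are available; with that in hand the step $b-1\rightsquigarrow b$ is complete, and iterating it proves the theorem.
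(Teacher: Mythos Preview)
Your approach is genuinely different from the paper's, and the place you flag as ``the main obstacle'' is a real gap, not just bookkeeping.

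The paper does not use a block-triangular decomposition of a single weight space. Instead it passes to the symmetric-group picture and writes $\psi_{a\times b}$ as a \emph{composition} of two maps (not a block matrix): a ``right factor'' $\varphi_{1,b}\circ\cdots\circ\varphi_{a,b}$, and a ``left factor'' built from the remaining raising operators. The left factor is then shown to be a genuine \emph{direct sum} of copies of $\psi_{a\times(b-1)}$ (one for each size-$a$ subset $Q\subseteq[ab]$ recording where $f_b$ sits), so the induction hypothesis dispatches it immediately. The right factor is handled by an independent, essentially two-variable argument: each $\varphi_{i,b}$ restricts, after freezing the positions not carrying $e_i$ or $f_b$, to a $\GL_2$ raising operator on $\tensor^{B+i}\IC^2$, and one checks on highest-weight vectors (using $i\le B$, which is exactly $a<b$) that this is injective. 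No skew Foulkes maps ever appear.

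By contrast, your triangular scheme forces you to prove injectivity of \emph{every} diagonal block $\Psi^{(\alpha)}$. For $\alpha\neq(1^a)$ these are honest ``skew'' maps such as $W\otimes\Sym^{\,b}W\to\Sym^2W\otimes\Sym^{\,b-1}W$ (already for $a=2$, $\alpha=(2)$), and they are not visibly compositions of rectangular $\Psi_{a'\times b'}$'s with injective comultiplications: the obvious factorization passes through a multiplication map $W\otimes W\to\Sym^2W$, which has a kernel. Your proposed rescue via downward propagation in $a$ and a further induction is plausible in spirit, but it is exactly the hard part of the proof and is not carried out; as written, the argument is incomplete. The paper's factorization sidesteps this entirely: by moving the $a$ distinguished basis vectors one at a time \emph{before} running the size-$(b-1)$ Foulkes map, the induction hypothesis is invoked on a direct sum rather than on one block of a triangular system, and the residual step reduces to a clean $\GL_2$ computation.
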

The proof uses the analog map $\psi_{a \times b}$, defined in Section~\ref{sec:prelim},
and decomposes it into a composition of two maps (Section~\ref{sec:decomp}) whose injectivity is proved independently (Sections~\ref{subsec:leftfactor} and Sections~\ref{subsec:rightfactor}).

Theorem~\ref{thm:main} allows us to verify Conjecture~\ref{conj:foulkes} in infinitely many cases while only doing a finite calculation:
\cite{MN:05} calculate that $\Psi_{4 \times 4}$ is injective, so Theorem~\ref{thm:main} implies the following corollary.
\begin{corollary}\label{cor:computation}
Conjecture~\ref{conj:foulkes} is true in all cases where $a = 4$.
\end{corollary}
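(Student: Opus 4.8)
The plan is to combine the computation of M\"uller and Neunh\"offer with the propagation mechanism of Theorem~\ref{thm:main}. First I would recall from the introduction that, for a pair $(a,b)$ with $a \le b$ and $\dim U \ge b$, Conjecture~\ref{conj:foulkes} is equivalent to the existence of \emph{some} $\GL(U)$-equivariant inclusion $\Sym^a(\Sym^b U) \hookrightarrow \Sym^b(\Sym^a U)$, and that any injective $\GL(U)$-equivariant map between these two spaces furnishes such an inclusion. In particular it suffices to show that the natural candidate map $\Psi_{a \times b}$ is injective.

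Specialising to $a = 4$, the conjecture concerns exactly the pairs $(4,b)$ with $b \ge 4$. For the base case $b = 4$ I would cite \cite{MN:05}, who compute that $\Psi_{4 \times 4}$ is injective. For $b \ge 5$ I would invoke Theorem~\ref{thm:main} with $a = 4$ and with the theorem's parameter $b$ set equal to $5$: its hypothesis then reads ``$\Psi_{4 \times (5-1)} = \Psi_{4 \times 4}$ is injective'', which is precisely the computation just cited, and its conclusion is that $\Psi_{4 \times c}$ is injective for every $c \ge 5$. Combining this with the base case shows $\Psi_{4 \times b}$ is injective for all $b \ge 4$, whence Conjecture~\ref{conj:foulkes} holds for all pairs $(4,b)$.

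I do not expect a genuine obstacle here: the corollary is a short logical deduction from Theorem~\ref{thm:main} together with one external computation. The only points requiring care are bookkeeping ones --- correctly handling the index shift $b-1$ versus $b$ in the hypothesis of Theorem~\ref{thm:main}, making sure the base case $b = 4$ (which the propagation theorem does not by itself produce) is explicitly supplied by \cite{MN:05}, and noting that injectivity of $\Psi_{4 \times b}$ is independent of $\dim U$ once $\dim U \ge b$, so that it transfers to the multiplicity inequality uniformly. None of this involves any calculation.
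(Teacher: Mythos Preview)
Your proposal is correct and matches the paper's own argument exactly: cite \cite{MN:05} for the injectivity of $\Psi_{4\times 4}$ and then apply Theorem~\ref{thm:main} to propagate injectivity to all $\Psi_{4\times b}$ with $b\ge 5$. The extra bookkeeping you mention (index shift, base case, independence of $\dim U$) is fine and harmless, though the paper leaves it implicit.
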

Although $\Psi_{5\times 5}$ is not injective, if $a=b$ then $\Sym^a(\Sym^b U) = \Sym^b(\Sym^a U)$.
The recent calculation \cite{CIM:15} reveals that $\Psi_{5 \times 6}$ is injective, therefore Theorem~\ref{thm:main} implies the following corollary.
\begin{corollary}\label{cor:bettercomputation}
Conjecture~\ref{conj:foulkes} is true in all cases where $a = 5$.
\end{corollary}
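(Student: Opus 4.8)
The plan is to derive the corollary by assembling three ingredients that are all available by this point: the equivalence between Conjecture~\ref{conj:foulkes} for a pair $(a,b)$ and the existence of a $\GL(U)$-equivariant inclusion as in~\eqref{eq:foulkesconj}; the finite computation of~\cite{CIM:15} that $\Psi_{5\times 6}$ is injective; and the propagation statement Theorem~\ref{thm:main}. Since the conjecture is only asserted for $a\le b$, fixing $a=5$ means establishing the multiplicity inequality for every integer $b\ge 5$, and I would organize the proof around the three ranges $b=5$, $b=6$, and $b\ge 7$.

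First I would record the standard reduction: if $\Psi_{a\times b}$ is injective, then Conjecture~\ref{conj:foulkes} holds for $(a,b)$. Indeed $\Psi_{a\times b}$ is a $\GL(U)$-equivariant map between the completely reducible modules $\Sym^a(\Sym^b U)$ and $\Sym^b(\Sym^a U)$, so an injective $\Psi_{a\times b}$ restricts to an injection on each isotypic component and the multiplicity of every $\{\lambda\}$ on the left is bounded by its multiplicity on the right; equivalently, injectivity of $\Psi_{a\times b}$ exhibits a map of the form~\eqref{eq:foulkesconj}. As usual there is no harm in taking $\dim U$ large enough, since the relevant isotypic multiplicities of both plethysms are already stable once $\dim U\ge b$.

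Now the three cases. For $b=5$ the statement is trivial: $\Sym^5(\Sym^5 U)=\Sym^5(\Sym^5 U)$, so all multiplicities coincide; this is the one value of $b$ for which we cannot route through injectivity of $\Psi_{5\times 5}$, which fails by~\cite{MN:05}, but where the conjectured inequality is trivially an equality. For $b=6$ the computation of~\cite{CIM:15} gives that $\Psi_{5\times 6}$ is injective, hence by the reduction above Conjecture~\ref{conj:foulkes} holds for $(5,6)$. For $b\ge 7$ I would invoke Theorem~\ref{thm:main} with $a=5$ and with the theorem's parameter ``$b$'' set equal to $7$: its hypothesis is precisely that $\Psi_{5\times 6}=\Psi_{5\times(7-1)}$ is injective, which has just been noted, so its conclusion is that $\Psi_{5\times c}$ is injective for all $c\ge 7$, and the reduction again yields Conjecture~\ref{conj:foulkes} for each pair $(5,c)$ with $c\ge 7$. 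Together the three cases exhaust all $b\ge 5$.

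I do not expect a genuine obstacle here: the mathematical content is entirely contained in the finite calculation of~\cite{CIM:15} and in Theorem~\ref{thm:main}, and this argument merely glues them together. The only points demanding care are cosmetic---correctly tracking the ``$b-1$'' shift in Theorem~\ref{thm:main}, so that the single injectivity input $\Psi_{5\times 6}$ simultaneously settles $b=6$ directly and, via the theorem, every $b\ge 7$; and separating out the diagonal case $b=5$, where $\Psi_{5\times 5}$ is not injective yet the conjectured inequality is trivially true.
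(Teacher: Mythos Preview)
Your proposal is correct and follows essentially the same approach as the paper: the paper's argument for Corollary~\ref{cor:bettercomputation} is exactly the combination of the trivial diagonal case $a=b=5$, the computation in \cite{CIM:15} that $\Psi_{5\times 6}$ is injective, and an application of Theorem~\ref{thm:main}. Your write-up is simply a more explicit unpacking of that sentence, including the standard remark that injectivity of $\Psi_{a\times b}$ yields the multiplicity inequality via complete reducibility.
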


\begin{remark}
Let $\Ch_b:=\overline{\GL_b (X_1 X_2 \cdots X_b)} \subseteq \Sym^b \IC^b$ denote the $\GL_b$ orbit closure of the monomial $X_1 X_2 \cdots X_b$.
We call this orbit closure the $b$th Chow variety.
The kernel of $\Psi_{a \times b}$ is known to be the homogeneous degree $a$ part of the vanishing ideal of $\Ch_b$,
as was shown by Hadamard, see e.g.~\cite[Section~8.6]{Lan:11}.
\end{remark}

For more information on the history of the Foulkes conjecture and the kernel of $\Psi_{a \times b}$ we refer the interested reader to \cite[Section~7.1]{Lan:15}.

\subsection*{Acknowledgments}
I thank JM Landsberg for bringing McKay's paper to my attention, for valuable discussions, and for providing help with the historical background.

\section{Preliminaries}\label{sec:prelim}
Fix natural numbers $a, b \in \IN_{>0}$ and let $V:=\IC^a \oplus \IC^b$.
The group $\GL_a \times \GL_b$ acts canonically on $\IC^a \oplus \IC^b$
and hence on the $ab$th tensor power $\tensor^{ab}V$.
Let $\aS_a$ be the symmetric group on $a$ letters and embed $\aS_a \subseteq \GL_a$ via permutation matrices.
Let $a \times b := (b,b,\ldots,b)$ denote the partition of $ab$ whose Young diagram is a rectangle with $a$ rows and $b$ columns (in anglophone notation).
Let $\emptyset$ denote the empty partition, i.e., $\emptyset := 0 \times 0$.
For complex numbers $s_1,\ldots,s_a$ let $\diag(s_1,\ldots,s_a)$ denote the diagonal matrix with $s_1,\ldots,s_a$ on the main diagonal.
Analogously for $\diag(t_1,\ldots,t_b)$.
For $\alpha \in \IN^a$ and $\beta\in \IN^b$ the set of tensors $(\tensor^{ab}V)_{\alpha,\beta} := $
\[
\{w \in \tensor^{ab}V \mid (\diag(s_1,\ldots,s_a),\diag(t_1,\ldots,t_b))w = s_1^{\alpha_1}\cdots s_a^{\alpha_a} t_1^{\beta_1}\cdots t_b^{\beta_b} w\}
\]
is called the $(\alpha,\beta)$ weight space of $\tensor^{ab}V$.
Here $\alpha$ and $\beta$ might be partitions, but could also be \emph{weak compositions},
i.e., we do not require the entries of $\alpha \in \IN_{\geq 0}^a$ and $\beta\in\IN_{\geq 0}^b$ to be ordered.
The weight space $(\tensor^{ab}V)_{a \times b,\emptyset}$ is closed under the action of $\aS_a$.
Let $(\tensor^{ab}V)_{a \times b,\emptyset}^{\aS_a}$ denote the $\aS_a$ invariant space.
Analogously define $(\tensor^{ab}V)_{\emptyset,b \times a}^{\aS_b}$.
On the space of tensors $\tensor^{ab} V$ we have the canonical action of $\aS_{ab}$ via permutation of the tensor factors.
To avoid confusion with $\aS_a$ or $\aS_b$, we use the symbol $\aSS_{ab}$ for the group $\aS_{ab}$ if it acts by permuting the tensor factors.
Since the actions of $\aSS_{ab}$ and $\GL(V)$ commute, $(\tensor^{ab}V)_{a \times b}^{\aS_a}$ and $(\tensor^{ab}V)_{b \times a}^{\aS_b}$ are $\aSS_{ab}$ representations.
Recall that the irreducible $\aSS_{ab}$ representations $\Specht\la$ are indexed by partitions $\la$ whose Young diagrams have $ab$ boxes, i.e., $|\la|=ab$.

Let $e_1,\ldots,e_a$ denote the standard basis of $\IC^a$
and let $f_1,\ldots,f_b$ denote the standard basis of $\IC^b$.
For $d \in \IN$, $W:=\tensor^d V$,
$1 \leq i \leq a$ and $1 \leq j \leq b$ we define $\varphi_{i,j}:W\to W$
to be the raising operator that projects each $(\alpha,\beta)$ weight space in $W$
to the $(\alpha-e_i,\beta+f_j)$ weight space.
For example $\varphi_{2,1}$ maps the $((3,2,1),(2,2))$ weight space to the $((3,1,1),(3,2))$ weight space.
\begin{claim}\label{cla:commute}
All the $\varphi_{i,j}$ commute.
\end{claim}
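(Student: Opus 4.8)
The plan is to describe each $\varphi_{i,j}$ explicitly on pure tensors and then verify commutativity by a direct slot-by-slot computation; the point is that $\varphi_{i,j}$ is assembled from operators that each act on a single tensor factor. Concretely, take as a basis of $W=\tensor^d V$ the pure tensors $g_{k_1}\otimes\cdots\otimes g_{k_d}$, each $g_{k_m}\in\{e_1,\ldots,e_a,f_1,\ldots,f_b\}$; such a tensor lies in the $(\alpha,\beta)$ weight space, where $\alpha_i=\#\{m:g_{k_m}=e_i\}$ and $\beta_j=\#\{m:g_{k_m}=f_j\}$. Let $\theta_{i,j}\colon V\to V$ be the linear map with $\theta_{i,j}(e_i)=f_j$ that sends all other basis vectors of $V$ to $0$. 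Then $\varphi_{i,j}=\sum_{m=1}^d\Id^{\otimes(m-1)}\otimes\theta_{i,j}\otimes\Id^{\otimes(d-m)}$: on a pure tensor this replaces one factor $e_i$ by $f_j$, summed over the positions $m$ with $g_{k_m}=e_i$, and annihilates a pure tensor having no factor $e_i$; this visibly carries the $(\alpha,\beta)$ weight space into the $(\alpha-e_i,\beta+f_j)$ weight space, so it is the operator $\varphi_{i,j}$. (One brief line here confirms this matches the defining description of $\varphi_{i,j}$.)

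Next I would compute $\varphi_{i,j}\varphi_{i',j'}$ applied to a pure tensor $t=g_{k_1}\otimes\cdots\otimes g_{k_d}$. Expanding the two sums, $\varphi_{i,j}\varphi_{i',j'}(t)$ becomes a double sum over positions $(m,n)$, with $\theta_{i',j'}$ applied in slot $m$ and then $\theta_{i,j}$ in slot $n$. The diagonal terms $n=m$ all vanish, since $\theta_{i,j}(\theta_{i',j'}(g_{k_m}))=0$: indeed $\theta_{i',j'}(g_{k_m})$ is either $0$ or the basis vector $f_{j'}$, and $\theta_{i,j}$ annihilates every $f$-type basis vector. (Here one uses that $i,i'\le a$, so both $\theta_{i,j}$ and $\theta_{i',j'}$ act nontrivially only on the $e$-type basis vectors, and the vanishing holds whether or not $i=i'$ or $j=j'$.) Each surviving term, indexed by a pair $(m,n)$ with $n\ne m$, is the pure tensor gotten from $t$ by applying $\theta_{i',j'}$ in slot $m$ and $\theta_{i,j}$ in slot $n$ (understood as $0$ unless $g_{k_m}=e_{i'}$ and $g_{k_n}=e_i$). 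Running the identical expansion for $\varphi_{i',j'}\varphi_{i,j}(t)$ yields exactly the same collection of surviving terms, merely reindexed by interchanging $m$ and $n$. Hence $\varphi_{i,j}\varphi_{i',j'}=\varphi_{i',j'}\varphi_{i,j}$ on every basis tensor, proving the claim.

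I do not expect a genuine obstacle here; the only step needing care is the diagonal case $n=m$, which rests on the identity $\theta_{i,j}\theta_{i',j'}=\theta_{i',j'}\theta_{i,j}=0$ in $\mathrm{End}(V)$. The argument can also be packaged more slickly: $X\mapsto\sum_{m=1}^d\Id^{\otimes(m-1)}\otimes X\otimes\Id^{\otimes(d-m)}$ is a representation of the Lie algebra $\mathfrak{gl}(V)$ on $\tensor^d V$, the elements $\theta_{i,j}$ pairwise commute in $\mathfrak{gl}(V)$ by the same computation, and a Lie algebra representation sends commuting elements to commuting operators.
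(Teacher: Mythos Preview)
Your proof is correct and follows essentially the same approach as the paper's: both express $\varphi_{i,j}$ as a sum of single-slot operators built from the rank-one map $e_i\mapsto f_j$ (the paper calls it $\zeta_{i,j}$ and includes a harmless normalization factor $\tfrac{1}{d}$), observe that any composite $\zeta_{i,j}\circ\zeta_{i',j'}$ vanishes so the diagonal terms drop out, and conclude that the remaining double sum for $\varphi_{i,j}\varphi_{i',j'}$ is symmetric in the two slot indices. Your closing Lie-algebra remark is a pleasant extra perspective not present in the paper.
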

We postpone the proof to Section~\ref{subsec:proofs}.

Define the map $\varphi_{a \times b}: (\tensor^{ab}V)_{a \times b,\emptyset} \to (\tensor^{ab}V)_{\emptyset,b \times a}$ via
\begin{equation}\label{eq:def:phiaxb}
\varphi_{a \times b} := \varphi_{1,1} \circ \varphi_{1,2} \circ \cdots \varphi_{1,b} \circ \varphi_{2,1} \circ \cdots \circ \varphi_{2,b} \circ \cdots \cdots \circ \varphi_{a,b}.
\end{equation}
Note that according to Claim~\ref{cla:commute} the order of the factors in \eqref{eq:def:phiaxb} does not matter.
The restriction of $\varphi_{a \times b}$ to the linear subspace of $\aS_a$ invariants shall be denoted by
\[
\psi_{a \times b} : (\tensor^{ab}V)_{a\times b,\emptyset}^{\aS_a} \to (\tensor^{ab}V)_{\emptyset,b\times a}.
\]
It is easy to see that $\psi_{a \times b}$ actually maps to $(\tensor^{ab}V)_{\emptyset,b\times a}^{\aS_b}$,
but we will omit this detail in the upcoming proofs.
Since each $\varphi_{i,j}$ is $\aSS_{ab}$ equivariant, the map $\psi_{a\times b}$ is $\aSS_{ab}$ equivariant.

Using Schur-Weyl duality we see that the multiplicity of $\{\la\}$ in $\Sym^a(\Sym^b U)$ equals
the multiplicity of the irreducible $\aSS_{ab}$ representation $[\la]$ in $(\tensor^{ab}V)_{a\times b,\emptyset}^{\aS_a}$
and the multiplicity of $\{\la\}$ in $\Sym^b(\Sym^a U)$ equals
the multiplicity of $[\la]$ in $(\tensor^{ab}V)_{b\times a,\emptyset}^{\aS_b}$.
Moreover, the multiplicity of $\{\la\}$ in the kernel of $\Psi_{a \times b}:\Sym^a(\Sym^b U)\to\Sym^b(\Sym^a U)$
is precisely the multiplicity of $[\la]$ in the kernel of $\psi_{a\times b}$.
Therefore we can phrase McKay's Theorem~\ref{thm:main} as follows:
\begin{theorem}[McKay's main theorem]\label{thm:mainsn}
If $\psi_{a \times (b-1)}$ is injective, then $\psi_{a \times c}$ is injective for all $c \geq b$.
\end{theorem}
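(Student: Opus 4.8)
The plan is to fix an arbitrary $c\ge b$ and, working in $\tensor^{ac}V$ with $V=\IC^a\oplus\IC^c$, to factor $\varphi_{a\times c}$ into the operators that fill the last $c-b+1$ columns of the rectangle $a\times c$ followed by those that fill the first $b-1$ columns. (Throughout I use the hypothesis $a\le b$ of the Foulkes setting; it is genuinely needed, since e.g.\ $\psi_{3\times 2}$ is not injective although $\psi_{3\times 1}$ is.) First I would invoke Claim~\ref{cla:commute} to write $\varphi_{a\times c}=\varphi^{\mathsf L}\circ\varphi^{\mathsf R}$, where $\varphi^{\mathsf R}$ is the composition of the $\varphi_{i,j}$ with $b\le j\le c$ and $\varphi^{\mathsf L}$ the composition of those with $1\le j\le b-1$. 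On weight spaces $\varphi^{\mathsf R}$ then maps $(\tensor^{ac}V)_{a\times c,\emptyset}$ to $(\tensor^{ac}V)_{a\times(b-1),\gamma}$ with $\gamma=(0,\dots,0,a,\dots,a)$ ($b-1$ zeros, then $c-b+1$ copies of $a$), and $\varphi^{\mathsf L}$ maps the latter to $(\tensor^{ac}V)_{\emptyset,c\times a}$. Both maps are $\aS_a$-equivariant, so restricting to $\aS_a$-invariants yields $\psi_{a\times c}=\psi^{\mathsf L}\circ\psi^{\mathsf R}$, and it suffices to prove $\psi^{\mathsf L}$ and $\psi^{\mathsf R}$ injective separately.

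For $\psi^{\mathsf L}$ the key point is that no $\varphi_{i,j}$ with $j\le b-1$ ever acts on — or creates — the vectors $f_b,\dots,f_c$. Hence recording which of the $ac$ tensor slots carry these vectors identifies $(\tensor^{ac}V)_{a\times(b-1),\gamma}$, compatibly with the commuting actions of $\aSS_{ac}$ and of $\GL_a$, with the induced module $\mathrm{Ind}_{\aSS_{a(b-1)}\times\aSS_{a(c-b+1)}}^{\aSS_{ac}}\bigl((\tensor^{a(b-1)}V')_{a\times(b-1),\emptyset}\otimes M\bigr)$, where $V'=\IC^a\oplus\IC^{b-1}$ and $M$ is the fixed $\aSS_{a(c-b+1)}$-module of arrangements of $f_b,\dots,f_c$, on which $\aS_a$ acts trivially. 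Under this identification $\psi^{\mathsf L}$ becomes $\mathrm{Ind}\bigl(\psi_{a\times(b-1)}\otimes\mathrm{id}_M\bigr)$, so, induction and $\otimes\mathrm{id}_M$ being exact, $\psi^{\mathsf L}$ is injective exactly because $\psi_{a\times(b-1)}$ is. This is the one place the hypothesis enters.

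For $\psi^{\mathsf R}$ I would prove the stronger statement that the unrestricted $\varphi^{\mathsf R}$ is already injective on $(\tensor^{ac}V)_{a\times c,\emptyset}$, by evaluating it one factor at a time and showing each factor is injective on the weight space it is applied to. The observation is that $\varphi_{i,j}$ is (up to a harmless nonzero scalar) the action on $\tensor^{ac}V$ of the root vector of $\mathfrak{gl}(V)$ taking $e_i$ to $f_j$; it is the lowering operator of an $\mathfrak{sl}_2$-triple whose Cartan element acts on the $(\alpha,\beta)$-weight space by the scalar $\alpha_i-\beta_j$, and in any finite-dimensional $\mathfrak{sl}_2$-module the lowering operator is injective on every weight space of weight $\ge 1$ (it annihilates only the lowest weight line of each irreducible summand). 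While evaluating $\varphi^{\mathsf R}$, at the step where $\varphi_{i,j}$ is applied one has $\alpha_i\ge b$ (that entry starts at $c$ and is lowered $c-b+1$ times in all) and $\beta_j\le a-1$ (that entry is raised $a$ times in all), so $\alpha_i-\beta_j\ge b-a+1\ge 1$ — here the inequality $a\le b$ is indispensable. Thus each factor, and so $\varphi^{\mathsf R}$ and its restriction $\psi^{\mathsf R}$, is injective; combining the two factors, $\psi_{a\times c}=\psi^{\mathsf L}\circ\psi^{\mathsf R}$ is injective.

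I expect the main obstacle to be the $\psi^{\mathsf L}$ step: not the idea, which is simple, but making the identification with the induced module precise while keeping track of both the $\aSS_{ac}$- and the commuting $\GL_a$-action, so that passing to $\aS_a$-invariants genuinely turns $\varphi_{a\times(b-1)}$ into $\psi_{a\times(b-1)}$. The $\psi^{\mathsf R}$ step is short once the $\mathfrak{sl}_2$-structure is noticed; the only thing to watch there is that the weight $\alpha_i-\beta_j$ never drops below $1$ during the evaluation — which is precisely what $a\le b$ guarantees.
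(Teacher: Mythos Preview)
Your argument is correct and shares the paper's architecture: factor $\psi_{a\times c}$ as (operators with $j\le b-1$)${}\circ{}$(operators with $j\ge b$), identify the left factor with copies of $\psi_{a\times(b-1)}$, and show the right factor is injective for elementary reasons. The differences are in execution. First, the paper argues by induction on $c$, so only the single column $j=b$ appears in its right factor; you treat all columns $b,\ldots,c$ at once. Second, for the left factor the paper writes out the direct sum over the position-sets $Q$ carrying $f_b$ explicitly, whereas your induced-module formulation $\mathrm{Ind}(\psi_{a\times(b-1)}\otimes\mathrm{id}_M)$ packages the same decomposition more compactly (and exactness of induction replaces the ``direct sum of injective maps with independent ranges'' clause). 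Third, and most substantively, for the right factor the paper again splits over position-sets $Q$, reduces each summand to a $\GL_2$ raising operator on $\tensor^{B+i}\IC^2$, and then appeals to Schur--Weyl duality plus an explicit weight-vector computation to see that this operator is injective; your observation that $\varphi_{i,j}$ is (up to scalar) the lowering operator of an $\mathfrak{sl}_2$-triple with Cartan weight $\alpha_i-\beta_j$ on the $(\alpha,\beta)$ weight space reaches the same conclusion in one stroke. Both proofs use the inequality $a\le b$ (the paper writes $a<b$) at exactly the same moment, to force the relevant weight to be positive.
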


The rest of this paper is devoted to the proof of Theorem~\ref{thm:mainsn}.
Using induction it suffices to prove it for the case $b=c$.
The proof goes by decomposing $\psi_{a \times b}$ into a composition of two maps (Section~\ref{sec:decomp})
and then proving injectivity for the left factor (Section~\ref{subsec:leftfactor}) and the right factor (Section~\ref{subsec:rightfactor}) separately.

\section{Decomposition of the canonical map}\label{sec:decomp}
Let $a < b$ and set $B := b-1$ to simplify notation.
We interpret $\GL_B \subseteq \GL_b$ as $B \times B$ matrices embedded in the upper left corner of $b \times b$ matrices with an additional 1 at the lower right corner.
Let $V:=\IC^a \oplus \IC^b$.
\begin{claim}\label{cla:mapstoinv}
Given a tensor power $W:=\tensor^d V$.
The composition
\[
\varphi_{1,b} \circ \varphi_{2,b} \circ \cdots \circ \varphi_{a,b}: W \to W
\]
maps $\aS_a$-invariants to $\aS_a$-invariants.
\end{claim}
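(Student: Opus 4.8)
The plan is to prove the slightly stronger statement that the operator
\[
\Phi \;:=\; \varphi_{1,b}\circ\varphi_{2,b}\circ\cdots\circ\varphi_{a,b}\colon W\to W
\]
is $\aS_a$-equivariant, i.e.\ commutes with the action of $\aS_a$ on $W=\tensor^d V$; an $\aS_a$-equivariant endomorphism automatically sends $\aS_a$-invariants to $\aS_a$-invariants, which is the assertion of the claim. Recall that $\aS_a\subseteq\GL_a\subseteq\GL(V)$ acts on $V$ by permuting $e_1,\dots,e_a$ and fixing $f_1,\dots,f_b$, and acts on $W$ diagonally, so a $\sigma\in\aS_a$ carries the $(\alpha,\beta)$ weight space of $W$ to the $(\sigma\cdot\alpha,\beta)$ weight space.

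The key point is the conjugation identity
\[
\sigma\circ\varphi_{i,j}\circ\sigma^{-1}\;=\;\varphi_{\sigma(i),j}\qquad\text{for all }\sigma\in\aS_a,\ 1\le i\le a,\ 1\le j\le b.
\]
To see this, use the description of $\varphi_{i,j}$ as the raising operator sending a standard basis tensor $v_1\otimes\cdots\otimes v_d$ to $\sum_{k\,:\,v_k=e_i}v_1\otimes\cdots\otimes v_{k-1}\otimes f_j\otimes v_{k+1}\otimes\cdots\otimes v_d$. Conjugating this operator by $\sigma$ simply relabels every occurrence of the factor $e_i$ as $e_{\sigma(i)}$, while leaving every $f_j$ untouched since $\sigma$ acts trivially on the $\IC^b$ summand; the result is exactly the defining recipe for $\varphi_{\sigma(i),j}$. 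The point to keep track of is that the column index $j$ lives on the side where $\aS_a$ acts trivially, and is therefore unchanged by the conjugation.

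Now conjugate $\Phi$ by $\sigma\in\aS_a$ one factor at a time, using the identity above:
\[
\sigma\circ\Phi\circ\sigma^{-1}\;=\;\varphi_{\sigma(1),b}\circ\varphi_{\sigma(2),b}\circ\cdots\circ\varphi_{\sigma(a),b}.
\]
The right-hand side is the product of the very operators $\varphi_{1,b},\dots,\varphi_{a,b}$, merely reordered according to the permutation $\sigma$. By Claim~\ref{cla:commute} these operators pairwise commute, so the reordering has no effect and the right-hand side equals $\varphi_{1,b}\circ\varphi_{2,b}\circ\cdots\circ\varphi_{a,b}=\Phi$. Hence $\sigma\circ\Phi=\Phi\circ\sigma$ for every $\sigma\in\aS_a$, which is what we wanted. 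There is no real obstacle in this argument; the only step meriting care is the conjugation identity of the previous paragraph, and within it the observation that the index $b$ is fixed by $\aS_a$ — this is precisely what guarantees that after conjugating we land back inside the set $\{\varphi_{1,b},\dots,\varphi_{a,b}\}$, so that Claim~\ref{cla:commute} can be applied to close the loop.
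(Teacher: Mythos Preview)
Your proof is correct and follows essentially the same approach as the paper: both establish the conjugation identity $\sigma\,\varphi_{i,b}\,\sigma^{-1}=\varphi_{\sigma(i),b}$ and then invoke Claim~\ref{cla:commute} to absorb the permutation of the factors. The only difference is cosmetic: you prove the slightly stronger statement that $\Phi$ is $\aS_a$-equivariant, whereas the paper applies the identity directly to an invariant vector $w$ (using $\pi w=w$) to conclude that $\Phi(w)$ is invariant.
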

We postpone the proof to Section~\ref{subsec:proofs}.

Let $(0^B,a)$ denote the weak composition $(0,0,...,0,a) \in \IN^b$ that is zero everywhere but in the last entry.

Consider the map $\psi_{a \times b} : (\tensor^{ab}V)_{a\times b,\emptyset}^{\aS_a} \to (\tensor^{ab}V)_{\emptyset,b\times a}$.
According to Claim~\ref{cla:mapstoinv} the right factor of
\begin{equation}\label{eq:leftrightfactor}
\psi_{a \times b} = (\underbrace{\varphi_{1,1} \circ \cdots \circ \varphi_{1,B} \circ \varphi_{2,1} \circ \cdots \circ \varphi_{a,B}}_{\text{left factor}}) \circ (\underbrace{\varphi_{1,b} \circ \cdots \circ \varphi_{a,b}}_{\text{right factor}}).
\end{equation}
maps $\aS_a$ invariants to $\aS_a$ invariants, so that we can write
\[
(\tensor^{ab}V)_{a\times b,\emptyset}^{\aS_a} \stackrel{\text{right factor}}{\longrightarrow} (\tensor^{ab}V)_{a\times B,(0^B,a)}^{\aS_a} \stackrel{\text{left factor}}{\longrightarrow} (\tensor^{ab}V)_{\emptyset,b\times a}.
\]
The left factor is similar to $\psi_{a \times B}$, but with a larger domain of definition.
The proof idea for Theorem~\ref{thm:mainsn} is to prove injectivity of both factors independently, where the injectivity of the left factor will follow from the induction hypothesis that $\psi_{a \times B}$ is injective.

\section{The left factor}\label{subsec:leftfactor}
We want to be more precise about the relationship between the left factor of \eqref{eq:leftrightfactor} and $\psi_{a \times B}$.
Let $\IC^B \subseteq \IC^b$ be embedded as vectors that have a zero as their last component.
Let $V':= \langle e_1,\ldots,e_a,f_1,\ldots,f_B\rangle = \IC^a \oplus \IC^B \subseteq V$ be the complement of the 1-dimensional vector space spanned by the basis vector~$f_b$.
We decompose $\tensor^{ab}V$ as follows. $\tensor^{ab}V = $
\begin{equation}\label{eq:decomposeleft}
\bigoplus_{Q \subseteq [ab]} \langle \{ v_{1} \otimes v_{2} \otimes \cdots \otimes v_{ab} \mid v_{i} \in V' \text{ if } i \notin Q, \ v_{i}=f_b \text{ if } i \in Q \}\rangle,
\end{equation}
where $[ab]:=\{1,2,\ldots,ab\}$ and $\langle\ \rangle$ denotes the linear span.
We denote by $\tensor^{ab}_Q V$ the summand in \eqref{eq:decomposeleft} corresponding to $Q$.
The weight spaces split as follows:
\[
(\tensor^{ab}V)_{a\times B,(0^B,a)}^{\aS_a} = \bigoplus_{{Q \subseteq [ab]}\atop{|Q|=a}} (\tensor^{ab}_Q V)_{a\times B,(0^B,a)}^{\aS_a}
\]
and
\[
(\tensor^{ab}V)_{\emptyset,b \times a} = \bigoplus_{{Q \subseteq [ab]}\atop{|Q|=a}} (\tensor^{ab}_Q V)_{\emptyset,b \times a}.
\]
As a $\GL_a \times \GL_B$ representation, $\tensor^{ab}_Q V$ is canonically isomorphic to $\tensor^{ab-|Q|} V'$.
Using this isomorphism, for $|Q|=a$ we see that the following diagram commutes:
\[
(\tensor^{ab}_Q V)_{a\times B,(0^B,a)}^{\aS_a} \simeq (\tensor^{aB} V')_{a\times B,\emptyset}^{\aS_a}
\]
\[
\psi_{a \times B}^Q \downarrow \quad \quad \quad \quad \quad \quad \quad \downarrow \psi_{a \times B}
\]
\[
\ \ \ \ \ \ \ (\tensor^{ab}_Q V)_{\emptyset,b \times a} \simeq (\tensor^{aB} V')_{\emptyset,B \times a}
\]
where $\psi_{a \times B}^Q$ is the left factor of \eqref{eq:leftrightfactor} restricted to $(\tensor^{ab}_Q V)_{a\times B,(0^B,a)}^{\aS_a}$.
Hence we have
\[
\psi_{a \times b} = (\bigoplus_{{Q \subseteq [ab]} \atop {|Q|=a}} \psi_{a \times B}^Q) \circ (\varphi_{1,b}\circ \varphi_{2,b} \circ \cdots \circ \varphi_{a,b}).
\]
We see that since $\psi_{a \times B}$ is injective by induction hypothesis, each $\psi_{a \times B}^Q$ is injective,
and $\psi_{a \times b}$ is injective as a direct sum of injective maps whose ranges form a direct sum.

\section{The right factor}\label{subsec:rightfactor}
To show that the right factor of \eqref{eq:leftrightfactor} is injective it suffices to show injectivity for the $a$ factors $\varphi_{i,b}$, $1 \leq i \leq a$.
Let $\gamma_i$ denote the weak composition
\[
\gamma_i := (\underbrace{B,B,\ldots,B}_{i \text{ times}}, \underbrace{b,b,\ldots,b}_{a-i \text{ times}})
\]
whose Young diagram has $ab-i$ boxes (if we flip the rows, then it is a partition).
We have $\gamma_0=a \times b$ and $\gamma_a=a \times B$.
With this notation we can write
\[
(\tensor^{ab} V)_{a \times b,\emptyset} \hspace{10cm}
\]
\[
\rotatebox{90}{=} \hspace{10cm}
\]
\[
(\tensor^{ab} V)_{\gamma_0,(0^B,0)} \stackrel{\varphi_{1,b}}{\to} (\tensor^{ab} V)_{\gamma_1,(0^B,1)} \stackrel{\varphi_{2,b}}{\to} (\tensor^{ab} V)_{\gamma_2,(0^B,2)} \stackrel{\varphi_{3,b}}{\to} \cdots \stackrel{\varphi_{a,b}}{\to} (\tensor^{ab} V)_{\gamma_a,(0^B,a)}
\]
\[
\hspace{11cm} \rotatebox{90}{=}
\]
\[
\hspace{10.5cm} (\tensor^{ab} V)_{a \times B,(0^B,a)}
\]
Note that formally we are only required to prove the injectivity of this chain
$\varphi_{a,b}\circ\cdots\circ\varphi_{1,b}$
restricted to the $\aS_a$ invariant space $(\tensor^{ab} V)_{a \times b,\emptyset}^{\aS_a}$.
We ignore the action of $\aS_a$ and in the following claim we prove the injectivity of each factor of the chain, which finishes the proof of Theorem~\ref{thm:mainsn}.
\begin{claim}
Let $a < b$ and $1 \leq i \leq a$.
The map
\[
\varphi_{i,b} : (\tensor^{ab} V)_{\gamma_{i-1},(0^B,i-1)} \to (\tensor^{ab} V)_{\gamma_{i},(0^B,i)}
\]
is injective.
\end{claim}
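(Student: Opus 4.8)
The plan is to reduce the claim to the classical fact that the raising operator of an $\mathfrak{sl}_2$-triple is injective on every weight space of negative weight. First I would record what $\varphi_{i,b}$ does concretely: on a basis tensor $v_1\otimes\cdots\otimes v_{ab}$ (each $v_k$ a standard basis vector of $V$) it returns the sum, over all positions $k$ with $v_k=e_i$, of the tensor obtained by replacing that $v_k$ with $f_b$. In particular $\varphi_{i,b}$ is the operator on $\tensor^{ab}V$ induced by the Leibniz (derivation) rule from the matrix unit in $\mathfrak{gl}(V)$ that sends $e_i\mapsto f_b$ and kills all other basis vectors, and it indeed carries each $(\alpha,\beta)$ weight space into the $(\alpha-e_i,\beta+f_b)$ weight space.

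Next I would decompose $V=U\oplus V''$ with $U:=\langle e_i,f_b\rangle\cong\IC^2$ and $V'':=\langle e_s\mid s\neq i\rangle\oplus\langle f_t\mid t\neq b\rangle$, and split $\tensor^{ab}V$ accordingly into \emph{pattern spaces}: for a subset $S\subseteq[ab]$ together with a choice of a basis vector $v_k\in V''$ for each $k\notin S$, one takes the span of those basis tensors whose factors lie in $U$ precisely at the positions in $S$ and equal the prescribed $v_k$ elsewhere. Each pattern space is canonically $\tensor^{|S|}U$ (tensored with a fixed line coming from the $V''$-part), $\varphi_{i,b}$ maps it into itself, and there it acts as the Leibniz operator on $\tensor^{|S|}U$ induced by the matrix unit $e_i\mapsto f_b$ of $\mathfrak{gl}(U)$. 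A short bookkeeping step shows that both $(\tensor^{ab}V)_{\gamma_{i-1},(0^B,i-1)}$ and $(\tensor^{ab}V)_{\gamma_i,(0^B,i)}$ are direct sums, over one and the same set of patterns --- those whose $V''$-part has exactly $b-1$ factors $e_s$ for each $s<i$, exactly $b$ factors $e_s$ for each $s>i$, and no factor $f_t$ with $t<b$, which forces $N:=|S|=b+i-1$ --- of the corresponding weight spaces inside $\tensor^{N}U$; on the source side the $U$-part has $b$ copies of $e_i$ and $i-1$ copies of $f_b$, on the target side $b-1$ copies of $e_i$ and $i$ copies of $f_b$.

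It therefore suffices to prove that in $\tensor^{N}\IC^2$ the raising operator sends the weight space with $b$ copies of the first basis vector injectively into the weight space with $b-1$ copies of the first basis vector. Decomposing $\tensor^{N}\IC^2$ into irreducible $\mathfrak{sl}_2$-modules, the raising operator fails to be injective on the weight-$m$ space of a summand only if $m$ equals that summand's highest weight; since the source weight space has $\mathfrak{sl}_2$-weight $(i-1)-b$, which is strictly negative because $i\le a<b$, it is never a highest weight, so the raising operator is injective on it. Reassembling the pieces, $\varphi_{i,b}$ restricted to $(\tensor^{ab}V)_{\gamma_{i-1},(0^B,i-1)}$ is a direct sum of injective maps landing in pairwise distinct summands of a direct sum decomposition of $(\tensor^{ab}V)_{\gamma_i,(0^B,i)}$, and is thus injective. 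The only place where care is needed --- and where I expect the sole genuine subtlety to lie --- is the bookkeeping in the second paragraph: one must check that the multiplicities of the basis vectors other than $e_i$ and $f_b$ demanded by the source weight $(\gamma_{i-1},(0^B,i-1))$ and by the target weight $(\gamma_i,(0^B,i))$ genuinely agree, so that the reduction to a single $\GL(U)=\GL_2$ really is legitimate; everything else is classical $\mathfrak{sl}_2$-theory.
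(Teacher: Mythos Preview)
Your proof is correct and follows the same overall strategy as the paper: split $V=\langle e_i,f_b\rangle\oplus V''$, decompose the source and target weight spaces according to what happens on the $V''$-side, and reduce to the raising operator between the $(b,i-1)$ and $(B,i)$ weight spaces of $\tensor^{B+i}\IC^2$. The bookkeeping you flag as the one subtle point is exactly right: the entries of $\gamma_{i-1}$ and $\gamma_i$ agree in every coordinate except the $i$th, and $(0^B,i-1)$ and $(0^B,i)$ agree except in the $b$th, so the admissible $V''$-patterns for source and target coincide.

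The only substantive difference from the paper is the final step. The paper proves injectivity of the $\GL_2$ raising operator $\zeta$ by invoking Schur--Weyl duality $\tensor^{B+i}\IC^2=\bigoplus_\lambda\{\lambda\}\otimes[\lambda]$, observing that the $(b,i-1)$ weight space is multiplicity-free as an $\aSS_{B+i}$-representation, and then verifying on the explicit vector $(e\wedge f)^{\lambda_2}\otimes(e^{b-\lambda_2}\cdot f^{i-1-\lambda_2})$ that $\zeta$ does not kill it. Your argument via $\mathfrak{sl}_2$-theory---the raising operator is injective on any weight space of strictly negative weight, and here the weight $(i-1)-b$ is negative because $i\le a<b$---is shorter and avoids both Schur--Weyl duality and the explicit calculation. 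Your pattern-space decomposition (fixing the actual basis vectors in the $V''$-slots, not merely the set of positions) is also slightly finer than the paper's indexing by $Q$; this makes the identification with $\tensor^{B+i}\IC^2$ literal rather than up to a tensor factor on which $\GL_2$ acts trivially, but the gain is purely cosmetic.
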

\begin{proof}
Fix $i$ with $1 \leq i \leq a$.
Note that the inequality $i \leq B$ holds, because $i \leq a < b$. We will use it later.

We proceed similary to the proof for the left factor.
Let $V'=\langle e_1,\ldots,e_{i-1},e_{i+1},\ldots,e_a,f_1,f_2,\ldots,f_B\rangle$
be the complement of $\langle e_i,f_b\rangle$.
For a subset $Q \subseteq [ab]$ we denote by $\tensor^{ab}_Q V$ the vector space spanned by
\[
\{ v_{1} \otimes \cdots \otimes v_{ab} \mid v_{i} \in V' \text{ if } i \in Q, v_{i} \in \langle e_i,f_b\rangle \text{ if } i \notin Q \}.
\]
The weight spaces split as follows:
\[
(\tensor^{ab} V)_{\gamma_{i-1},(0^B,i-1)} = \bigoplus_{Q \subseteq [ab]\atop{|Q|=ab-(B+i)}} (\tensor^{ab}_Q V)_{\gamma_{i-1},(0^B,i-1)}
\]
and
\[
(\tensor^{ab} V)_{\gamma_{i},(0^B,i)} = \bigoplus_{Q \subseteq [ab]\atop{|Q|=ab-(B+i)}} (\tensor^{ab}_Q V)_{\gamma_{i},(0^B,i)}.
\]
We embed $\GL_2 \subseteq \GL(V)$ to be the $\GL_2$ that preserves the linear space $\langle e_i,f_b\rangle$.
As a $\GL_2$ representation $\tensor^{ab}_QV$ is canonically isomorphic to $\tensor^{ab-|Q|}\IC^2$.
Using this isomorphism, for $|Q|=ab-(B+i)$ we see that the following diagram commutes:
\[
(\tensor^{ab}_Q V)_{\gamma_{i-1},(0^B,i-1)} \simeq (\tensor^{B+i} \IC^2)_{b,i-1}
\]
\[
\varphi_{i,b}^Q \downarrow \quad \quad \quad \quad \quad \quad \quad \downarrow \zeta
\]
\[
(\tensor^{ab}_Q V)_{\gamma_{i},(0^B,i)} \simeq (\tensor^{B+i} \IC^2)_{B,i}
\]
where $\varphi_{i,b}^Q$ is the raising operator $\varphi_{i,b}$
restricted to the weight space $(\tensor^{ab}_Q V)_{\gamma_{i-1},(0^B,i-1)}$,
and
$\zeta : (\tensor^{B+i} \IC^2)_{b,i-1} \to (\tensor^{B+i} \IC^2)_{B,i}$ is the canonical $\GL_2$ raising operator.

It remains to show that $\zeta$ is injective,
because then all $\varphi_{i,b}^Q$ are injective and hence
$\varphi_{i,b} = \bigoplus_Q \varphi_{i,b}^Q$ is injective as a direct sum of injective maps whose ranges form a direct sum.
We will use the fact that $i \leq B$.

By Schur-Weyl duality,
\[
\tensor^{B+i}\IC^2 = \bigoplus_{\lambda} \{\la\} \otimes [\la]
\]
as a $\GL_2 \times \aSS_{B+i}$ representation, where $\lambda$ runs over all partitions with $B+i$ boxes and at most 2 parts,
and $\{\la\}$ denotes the irreducible $\GL_2$ representation of type~$\la$.
The Kostka number determines the dimension of the $(b,i-1)$ weight space in $\Weyl{\la}$:
It is 1-dimensional iff $i-1 \geq \la_2$ and zero otherwise.
Therefore as an $\aSS_{B+i}$ representation we have
\[
(\tensor^{B+i}\IC^2)_{(b,i-1)} = \bigoplus_{\lambda \atop {i-1 \geq \la_2}} [\la]. 
\]
By Schur's lemma, since $\zeta$ is $\aSS_{B+i}$ equivariant, it suffices to check for each partition $\lambda$ that
a single $(b,i-1)$ weight vector in the $\la$-isotypic $\GL_2$ component is not mapped to zero by $\zeta$.
We will choose an explicit weight vector as follows.
Let $\{e,f\}$ be the standard basis of $\IC^2$.
For $B > i-1 \geq \la_2$ we calculate
\[
(e \wedge f)^{\la_2} \otimes (e^{b-\la_2} \cdot f^{i-1-\la_2}) \quad \stackrel{\zeta}{\mapsto} \quad (e \wedge f)^{\la_2} \otimes (e^{B-\la_2} \cdot f^{i-\la_2}) \ \neq \ 0,
\]
where $\cdot$ denotes the symmetric product.
\end{proof}

\section{Appendix: Proofs of the preliminary claims}\label{subsec:proofs}
\begin{proof}[Proof of Claim~\protect{\ref{cla:commute}}]
The map $\varphi_{i,j}$ is defined on basis tensors as follows:\\
$\varphi_{i,j}(v_1 \otimes \cdots \otimes v_d) = $
\[
\tfrac 1 d\Big( \zeta_{i,j}(v_1) \otimes v_2 \otimes \cdots \otimes v_d + v_1 \otimes \zeta_{i,j}(v_2) \otimes \cdots  \otimes v_d + \cdots + v_1 \otimes \cdots  \otimes \zeta_{i,j}(v_d)\Big),
\]
where all $v_k \in \{e_1,\ldots,e_a,f_1,\ldots,f_b\}$,
and $\zeta_{i,j}:V \to V$ maps $e_i$ to $f_j$ and vanishes on all other basis vectors.
Since $\zeta_{i,j} \circ \zeta_{i',j'} = 0$,
for the composition of maps $\varphi_{i,j} \circ \varphi_{i',j'}$ we have
\[
(\varphi_{i,j}\circ \varphi_{i',j'})(v_1 \otimes \cdots \otimes v_d) = \tfrac 1 {d(d-1)} \sum_{{p,q=1}\atop{p\neq q}}^d
v_1 \otimes v_2 \otimes \zeta_{i,j}(v_p) \otimes \cdots \otimes \zeta_{i',j'}(v_q) \otimes \cdots \otimes v_d.
\]
This expression is symmetric in $p$ and $q$.
Therefore $\varphi_{i,j}\circ \varphi_{i',j'}=\varphi_{i',j'}\circ \varphi_{i,j}$.
\end{proof}
\begin{proof}[Proof of Claim~\protect{\ref{cla:mapstoinv}}]
The action of $\aS_a$ permutes the weight spaces. More precisely, for $\pi \in\aS_a$ and $w \in W$ we have
\[
\pi \varphi_{i,b}(w) = \varphi_{\pi(i),b}(\pi w).
\]
Therefore, if we take $w$ to be $\aS_a$-invariant, we see that
\[
\pi\Big((\varphi_{1,b} \circ \cdots \circ \varphi_{a,b})(w)\Big) = 
(\underbrace{\varphi_{\pi(1),b} \circ \cdots \circ \varphi_{\pi(a),b}}_{\stackrel{\ref{cla:commute}}{=}\varphi_{1,b} \circ \cdots \circ \varphi_{a,b}})(\underbrace{\pi w}_{=w})
\]
for every $\pi \in \aS_a$, and hence $(\varphi_{1,b} \circ \cdots \circ \varphi_{a,b})(w)$ is $\aS_a$-invariant.
\end{proof}


\end{document}